\begin{document}
\title{A computational proof of the existence of the Dual Isogeny }
%
%
\author{Markos Karameris\inst{1}\orcidID{0000-0001-7009-5285} }
\authorrunning{M. Karameris}
%
\institute{National Kapodistrian University of Athens}
\maketitle              
\begin{abstract}
For $E$ an elliptic curve over a perfect field $K$, we present a proof of the existence of the dual isogeny $\hat{\phi}$ using computational methods linked to V{\'e}lu's formulae instead of the standard Galois correspondence method.
\keywords{Elliptic Curves \and Isogenies \and V{\'e}lu's formula \and Alternative Proof}
\end{abstract}

\section{Preliminaries and Notation}
Let $E(K)$ denote an elliptic curve over a perfect field $K$. 
For simplicicty we assume that $char(K)\ne 2$ or $3$. We will mostly preserve the notation of [\ref{silv}]. \\
To a finite subgroup of $E$ we can associate an isogeny $\phi:E\to E'$. This isogeny is not uniquely determined but if we require the isogeny to be normalized then we do get a unique morphism. 
\begin{definition}
If $\omega'$ is the invariant differential of $E'$ then the pullback $\phi^*(\omega')$ is a holomorphic differential of $E$ and since the space $\Omega_E$ is $1-$dimensional we have $\phi^*(\omega')=c\omega$ for some $c\in K$. We call $\phi$ normalized iff $c=1$. 
\end{definition}
The proof of the existence is presented in [\ref{silv}] in two parts: 
\begin{proposition}
Let $\phi:E\to E_1$ and $\psi:E\to E_2$ be two isogenies with $\phi$ seperable and $ker(\phi)\subset ker(\psi)$. Then there exists a unique isogeny $\lambda:E_1\to E_2$ with $\psi=\lambda\circ\phi$.
\end{proposition}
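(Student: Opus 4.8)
The plan is to translate the statement into the language of function fields, where it becomes an assertion about a Galois correspondence, and then to transport the resulting field inclusion back to a morphism of curves. I would work over $\bar{K}$ throughout and recover rationality over $K$ at the end; perfectness of $K$ ensures that ``separable'' behaves as expected and that Galois descent is available. Using the injections $\phi^{*}\colon\bar{K}(E_{1})\hookrightarrow\bar{K}(E)$ and $\psi^{*}\colon\bar{K}(E_{2})\hookrightarrow\bar{K}(E)$ I regard both $\bar{K}(E_{1})$ and $\bar{K}(E_{2})$ as subfields of $\bar{K}(E)$. By the anti-equivalence between smooth projective curves and their function fields, constructing $\lambda$ is then equivalent to exhibiting the inclusion $\psi^{*}\bar{K}(E_{2})\subseteq\phi^{*}\bar{K}(E_{1})$.

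To obtain that inclusion I would first describe $\mathrm{Aut}\bigl(\bar{K}(E)/\phi^{*}\bar{K}(E_{1})\bigr)$. Since $\phi$ is separable, $[\bar{K}(E):\phi^{*}\bar{K}(E_{1})]=\deg\phi=\#\ker\phi$. For every $T\in\ker\phi$ the translation $\tau_{T}\colon E\to E$ is an automorphism with $\phi\circ\tau_{T}=\phi$, so $\tau_{T}^{*}$ is an automorphism of $\bar{K}(E)$ fixing $\phi^{*}\bar{K}(E_{1})$ pointwise, and $T\mapsto\tau_{T}^{*}$ is injective. Thus we have $\#\ker\phi$ distinct automorphisms of an extension of degree $\#\ker\phi$, so $\bar{K}(E)/\phi^{*}\bar{K}(E_{1})$ is Galois with group exactly $\{\tau_{T}^{*}:T\in\ker\phi\}$.

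Now I would invoke the hypothesis $\ker\phi\subseteq\ker\psi$: for $T\in\ker\phi$ we also get $\psi\circ\tau_{T}=\psi$, hence $\tau_{T}^{*}$ fixes $\psi^{*}\bar{K}(E_{2})$, and therefore $\psi^{*}\bar{K}(E_{2})$ lies in the fixed field of the full Galois group, namely $\phi^{*}\bar{K}(E_{1})$ --- the inclusion we wanted. This inclusion is a field homomorphism $\bar{K}(E_{2})\to\bar{K}(E_{1})$ over $\bar{K}$, hence corresponds to a nonconstant rational map $\lambda\colon E_{1}\dashrightarrow E_{2}$, which is a morphism because $E_{1}$ is smooth; by construction $\phi^{*}\circ\lambda^{*}=\psi^{*}$, i.e. $\lambda\circ\phi=\psi$, and then $\lambda(O_{E_{1}})=\lambda(\phi(O_{E}))=\psi(O_{E})=O_{E_{2}}$ shows $\lambda$ is an isogeny. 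Uniqueness is immediate since $\phi$ is surjective; and since $\phi,\psi$ are defined over $K$, applying any $\sigma\in\mathrm{Gal}(\bar{K}/K)$ gives $\lambda^{\sigma}\circ\phi=\psi$, so $\lambda^{\sigma}=\lambda$ and $\lambda$ descends to $K$.

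I expect the only real obstacle to be the identification of $\mathrm{Gal}\bigl(\bar{K}(E)/\phi^{*}\bar{K}(E_{1})\bigr)$ with the translation automorphisms in the second step. Everything there hinges on the equality $\deg\phi=\#\ker\phi$ for separable $\phi$, so that the automorphisms one can write down by hand already exhaust the Galois group; once the count matches, normality and separability of the extension are automatic. Separability is genuinely needed: for an inseparable isogeny such as Frobenius the kernel is trivial while the degree is not, and the conclusion of the proposition fails.
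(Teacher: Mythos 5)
Your argument is correct --- it is essentially Silverman's proof of this proposition (the function-field/Galois-correspondence argument behind Corollary III.4.11) --- but it is precisely the route this paper is written to avoid: the paper states the proposition, cites Silverman for the classical proof, and then re-proves it in Section 2 by an explicit computation with V\'elu's formulae. Concretely, the paper first treats the case where $\phi$ and $\psi$ are \emph{normalized} (pullback of the invariant differential equal to $\omega$): there $\phi$ is literally given by V\'elu's sum over $G'=\ker\phi$, the candidate $\lambda$ is written down as the V\'elu sum over the image subgroup $\phi(\ker\psi)\cong\ker\psi/\ker\phi$, and the identity $\lambda\circ\phi=\psi$ is verified by regrouping the resulting double sum over cosets of $G'$ in $G=\ker\psi$. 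The general separable case is then recovered by post-composing with the isomorphisms that normalize $\phi$ and $\psi$. The trade-off is clear: your approach is shorter and more conceptual, needs no normal form for the isogenies, and isolates exactly where separability enters (the count $\deg\phi=\#\ker\phi$ that forces the extension to be Galois with group the translations --- a point you identify correctly); the paper's approach yields $\lambda$ as an explicit rational map, which is what powers the algorithm for the dual isogeny in Section 3, at the cost of the extra normalization bookkeeping and a coset-summation computation. Two small points if you want your version airtight: note that $\ker\phi\subseteq\ker\psi$ with $\psi$ a nonzero isogeny is implicitly assumed (otherwise $\lambda$ is the zero map and the function-field translation degenerates), and in the descent step spell out that $\phi^\sigma=\phi$ and $\psi^\sigma=\psi$ because both are defined over $K$, so $\lambda^\sigma\circ\phi=\lambda\circ\phi$ and surjectivity of $\phi$ on $\bar K$-points gives $\lambda^\sigma=\lambda$.
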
 \label{main}
The theorem is then proved in Theorem 6.1 using the above proposition for $\psi=[m]$. Notice that requiring the isogeny to be seperable is natural as an inseperable isogeny cannot be normal. 
The primary tools we will be using are V{\'e}lu's formulae:
Let $G$ be a subgroup of $E(K)$ and write $P=(x_P,y_P)$ then as in [\ref{vel}] the formula we will be using is:
\begin{equation}
    \phi(P)=(x(P)+\sum_{Q\in{G}\ne{O}}x(P+Q)-x(Q),y(P)+\sum_{Q\in{G}\ne{O}}y(P+Q)-y(Q)), P\not\in G
\end{equation}
where we also set $\phi(P)=O, \forall P\in G$ by definition.
\begin{remark}
The above definition is invariant under the action of $G$ and it's kernel is exactly $G$ as intended. It can only be used to describe a normalized isogeny $\phi$ which follows from the uniqueness property.
\end{remark}
\section{Existence of the Dual Isogeny}
The first result we will prove is the same as that of Proposition \ref{main} but in the case of normalized elliptic curve isogenies $\phi,\psi$.
\begin{proposition} \label{org}
Let $\phi:E\to E_1$ and $\psi:E\to E_2$ be two normalized isogenies with $ker(\phi)\subset ker(\psi)$. Then there exists a unique isogeny $\lambda:E_1\to E_2$ with $\psi=\lambda\circ\phi$.
\end{proposition}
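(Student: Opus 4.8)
The plan is to exhibit $\lambda$ explicitly from V\'elu's formula rather than through a fixed-field computation. Write $G=\ker\phi$ and $H=\ker\psi$, so that $G\subseteq H$ are finite Galois-stable subgroups of $E$. Because $\phi$ is surjective on $\overline{K}$-points with kernel $G$, the image $\overline{H}:=\phi(H)$ is a finite Galois-stable subgroup of $E_1$ satisfying $\phi^{-1}(\overline{H})=H+G=H$. I then take $\lambda\colon E_1\to E_1/\overline{H}$ to be the normalized V\'elu isogeny attached to $\overline{H}$. This will be the required map; it remains to identify its codomain with $E_2$ and to check $\lambda\circ\phi=\psi$.

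First, $\lambda\circ\phi$ is again normalized: by functoriality of the pullback, $(\lambda\circ\phi)^{*}\omega=\phi^{*}(\lambda^{*}\omega)=\phi^{*}(\omega_{E_1})=\omega_E$, using in turn that $\lambda$ and then $\phi$ are normalized. Its kernel is $\{P:\phi(P)\in\overline{H}\}=\phi^{-1}(\overline{H})=H=\ker\psi$. So $\lambda\circ\phi$ and $\psi$ are normalized isogenies out of $E$ with the same kernel, and they will agree — codomain included — once one knows that a normalized isogeny is pinned down by its domain and kernel; this forces $E_1/\overline{H}=E_2$ and $\lambda\circ\phi=\psi$. Uniqueness of $\lambda$ is then automatic, $\phi$ being nonconstant and hence an epimorphism of curves: if $\lambda'\circ\phi=\lambda\circ\phi$ then $\lambda'=\lambda$.

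It therefore remains to make precise the statement that a normalized isogeny is determined by its domain and kernel, with the codomain as part of the data — essentially the uniqueness principle alluded to in the Remark of Section~1. I would argue it as follows. Suppose $\alpha\colon E\to A$ and $\beta\colon E\to B$ are normalized with $\ker\alpha=\ker\beta$. The classical fact that two isogenies with a common kernel differ by an isomorphism gives $\iota\colon A\to B$ with $\beta=\iota\circ\alpha$; since $\alpha$ is nonconstant, $\alpha^{*}$ is injective on differentials, so from $\alpha^{*}(\iota^{*}\omega_B)=\beta^{*}\omega_B=\omega_E=\alpha^{*}\omega_A$ we obtain $\iota^{*}\omega_B=\omega_A$, i.e.\ $\iota$ is a normalized isomorphism. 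Writing $A$ and $B$ in short Weierstrass form, which is legitimate as $\mathrm{char}\,K\ne 2,3$, every such isomorphism is of the form $(x,y)\mapsto(u^{2}x,u^{3}y)$ for some nonzero $u$, hence pulls the invariant differential of $B$ back to $u^{-1}$ times that of $A$; normalization forces $u=1$, so $\iota=\mathrm{id}$, $A=B$, and $\alpha=\beta$.

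I expect the main obstacle to be precisely this determinacy step, and it is also where the \emph{computational} flavour of the argument is meant to enter. Instead of the differential computation one can try to prove $\lambda\circ\phi=\psi$ by substituting all three V\'elu formulae and matching their defining sums directly: this reduces to a Fubini-type splitting of $\sum_{Q\in H}(x(P+Q)-x(Q))$ into a sum over $G$ together with a sum over a transversal of $H/G$, the latter being recognized as the $\overline{H}$-sum on $E_1$ evaluated at $\phi(P)$ (and likewise for the $y$-coordinates). The delicate points there are the book-keeping with coset representatives and the need to relate the coordinates of $\phi(P)+\phi(Q)$ on $E_1$ to those of $P+Q$ on $E$; the remaining ingredients — finiteness and Galois-stability of $\overline{H}$, the identity $\phi^{-1}(\overline{H})=H$, and functoriality of $\phi^{*}$ — are routine.
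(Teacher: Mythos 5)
Your construction of $\lambda$ is the same as the paper's --- the normalized V\'elu isogeny on $E_1$ attached to $\overline{H}=\phi(\ker\psi)$ --- but you verify $\lambda\circ\phi=\psi$ by a different mechanism. The paper substitutes V\'elu's formula for all three maps and collapses the resulting double sum directly, using $x_{\phi(P)+\phi(g)}=x_{\phi(P+g)}$ and the decomposition of $\ker\psi$ into cosets of $\ker\phi$; this is exactly the ``Fubini-type splitting'' you relegate to a closing aside, and it is the computational heart of the paper. You instead observe that $\lambda\circ\phi$ and $\psi$ are both normalized (hence separable) isogenies out of $E$ with kernel $H$, and invoke --- and, to your credit, actually prove --- the determinacy of a normalized isogeny by its domain and kernel. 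Each route has something to offer: yours is cleaner and supplies the uniqueness lemma that the paper merely asserts (``unique as it is normalized''), while the paper's stays entirely inside the V\'elu framework and produces $\lambda$ as an explicit closed-form sum, which is what Section~3 of the paper needs; your route also leans on the classical fact that two separable isogenies with equal kernels differ by an isomorphism, which is adjacent to the Galois-correspondence machinery the paper is deliberately avoiding. One small caution on your determinacy step: ``codomain included'' is only literally true once you fix a class of models, since in long Weierstrass form the translations $(x,y)\mapsto(x+r,\,y+sx+t)$ with $u=1$ are also normalized isomorphisms; restricting to short Weierstrass form as you do (legitimate for $\mathrm{char}\,K\ne 2,3$), or settling for ``unique up to the unique normalized isomorphism of targets'' and composing with it to land in $E_2$, closes this gap, so the argument stands.
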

\begin{proof}
If $\phi:E\to E_1$ is normalized then we have that $\phi(P)=(\sum_{Q\in{G'}}(x_{P+Q}-x_Q),\sum_{Q\in{G'}}(y_{P+Q}-y_Q))$. But then \\ $\lambda(P)=(\sum_{g\in{G/G'}}(x_{P+\phi(g)}-\phi_x(g)),\sum_{g\in{G/G'}}(y_{P+\phi(g)}-\phi_y(g)))$. 
We observe that: $x_{\phi(P)+\phi(g)}=x_{\phi(P+g)}$ and similarly for $y$ we get: $\lambda\circ\phi(P)=(\sum_{g\in{G/G'}}(\phi_x(P+g)-\phi_x(g)),\sum_{g\in{G/G'}}(\phi_y(P+g)-\phi_y(g)))= \\ (\sum_{g\in{G/G'}}(\sum_{Q\in{G'}}(x_{P+g+Q}-x_{g+Q})),\sum_{g\in{G/G'}}(\sum_{Q\in{G'}}(y_{P+g+Q}-y_{g+Q}))= \\ (\sum_{Q\in{G}}(x_{P+Q}-x_Q),\sum_{Q\in{G}}(y_{P+Q}-y_Q))$
(for a homomorphism $h$ with normal subgroup $G'$, $\sum_{g\in{G}}h(g)=\sum_{g\in{G/G'}}\sum_{Q\in{G}}h(q+Q)$). And thus we obtain a normal isogeny with kernel $G$ which is also unique as it is normalized with $\lambda\circ\phi(P)=\psi(P)$. 
The fact $\lambda$ is an isogeny $E_1\to E_2$ is immediate 
$\lambda(O)=\lambda(\phi(O))=\psi(O)=O$ and
$\lambda(E_2)=\lambda(\phi(E_1))=\psi(E_1)=E_3$ as $\phi,\psi$ are onto.
\end{proof}
So far we restricted our results to normal isogenies however:
\begin{proposition}
For every seperable isogeny $\phi:E\to E'$ there exists an isomorphism $i:E'\to E''$ such that $i\circ\phi$ is normalized
\end{proposition}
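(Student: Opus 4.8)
The plan is to use the behaviour of the invariant differential under isomorphisms of Weierstrass curves. Since $\phi$ is separable, the induced map $\phi^{*}:\Omega_{E'}\to\Omega_{E}$ is nonzero; as $\Omega_{E}$ is one--dimensional this gives $\phi^{*}(\omega')=c\,\omega$ with $c\neq 0$, and moreover $c\in K$ because $\phi$, $E$ and $E'$ are all defined over the perfect field $K$. What we want is an isomorphism $i:E'\to E''$, defined over $K$, with $(i\circ\phi)^{*}(\omega'')=\omega$; since pullback is contravariant, $(i\circ\phi)^{*}=\phi^{*}\circ i^{*}$, so this is the same as arranging $i^{*}(\omega'')=c^{-1}\omega'$.

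First I would put $E'$ in short Weierstrass form $y^{2}=x^{3}+ax+b$ --- legitimate since $char(K)\neq 2,3$ --- so that $\omega'=dx/2y$. In this characteristic every isomorphism from $E'$ onto another short Weierstrass curve that fixes the origin has the diagonal shape $i_{u}:(x,y)\mapsto(u^{2}x,u^{3}y)$ for a unit $u$, the target being $E''_{u}:y^{2}=x^{3}+u^{4}ax+u^{6}b$ with differential $\omega''=dx/2y$. A one--line computation of the pullback then gives $i_{u}^{*}(\omega'')=u^{-1}\omega'$, since $i_{u}^{*}(dx)=u^{2}\,dx$ and $i_{u}^{*}(2y)=2u^{3}y$. (The precise exponent, $u$ or $u^{-1}$, depends on conventions; all that matters is that $u\mapsto$ scaling factor is a bijection of $K^{\times}$.)

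It then remains only to solve $u^{-1}=c^{-1}$, i.e.\ to take $u:=c$. Because $c\in K^{\times}$, the isomorphism $i:=i_{c}$ and the curve $E'':=E''_{c}$ are defined over $K$, and by construction $(i\circ\phi)^{*}(\omega'')=\phi^{*}\bigl(c^{-1}\omega'\bigr)=c^{-1}\phi^{*}(\omega')=c^{-1}(c\,\omega)=\omega$, so $i\circ\phi$ is normalized. That $i\circ\phi$ is genuinely an isogeny $E\to E''$ is immediate, being the composite of the isogeny $\phi$ with an isomorphism.

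The one spot deserving care, rather than a genuine obstacle, is the two structural inputs just invoked: that over a field of characteristic $\neq 2,3$ an isomorphism between short Weierstrass models is forced into the diagonal form $i_{u}$ (the translation parameters of a general admissible change of variables must vanish), together with the exact transformation law $i_{u}^{*}(\omega'')=u^{\mp 1}\omega'$. Once these are pinned down, both the existence of $i$ and its being defined over $K$ drop out of the single fact that separability forces $c\in K^{\times}$.
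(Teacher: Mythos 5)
Your proof is correct. Note first that the paper does not actually prove this proposition at all --- its ``proof'' is a pointer to the cited lecture notes --- so you have supplied an argument where the paper has none. What you construct is, however, precisely the machinery the paper itself develops later in Section 3: the diagonal isomorphism $i_u(x,y)=(u^2x,u^3y)$ with pullback $i_u^{*}(\omega'')=u^{-1}\omega'$ is the paper's Lemma (stated there for the general change of variables $(x,y)\mapsto(u^2x+t_1,\dots)$), and choosing $u$ to cancel the constant $c$ in $\phi^{*}(\omega')=c\,\omega$ is exactly the paper's normalization recipe in the ``Computational Aspects'' section (the paper writes $u=c^{-1}$ where your computation gives $u=c$; this is the convention ambiguity you flag, and your version is internally consistent, since $(i_c\circ\phi)^{*}(\omega'')=c^{-1}\cdot c\,\omega=\omega$). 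Two small remarks. The ``structural input'' you worry about at the end --- that every isomorphism between short Weierstrass models is forced to be diagonal --- is not actually load-bearing: for existence you only need to exhibit \emph{one} isomorphism realizing the scaling factor $c^{-1}$, and $i_c$ onto the visibly nonsingular curve $y^2=x^3+c^4ax+c^6b$ does that. And the step $\phi^{*}(\omega')=c\,\omega$ with $c$ a \emph{constant} deserves one more sentence: one-dimensionality of the space of differentials over the function field only gives $c\in K(E)$ a priori; that $c$ lies in $K$ uses the translation-invariance of $\omega$ and $\omega'$ together with $\phi$ being a homomorphism, and $c\neq 0$ is exactly the separability hypothesis. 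Since the paper itself asserts $c\in K$ in its opening Definition without comment, you are on at least equal footing, but pinning this down would make your argument fully self-contained.
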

\begin{proof}
for a proof see [\ref{lect}].
\end{proof}
We are now ready to prove the original statement of Proposition \ref{main}. for seperable isogenies.
\begin{proof}
Let $i_{\phi},i_{\psi}$ be the corresponding isomorphisms such that $\phi'= i_{\phi}\circ\phi$ and $\psi'= i_{\psi}\circ\psi$ are both normalized. Then by Proposition \ref{org} there is a $\lambda$ such that $\psi'=\lambda\circ\phi'$. Then $i_{\psi}^{-1}\circ\lambda\circ i_{\phi}$ is the required isogeny $E_1\to E_2$.
\end{proof}
\begin{remark}
The fact that we require both $\phi$ and $\psi$ to be seperable does not cause any problems as in the case of inseperable $\psi$ we can simply write $\psi=\pi^n\circ\psi_{sep}$ where $\pi$ is the Frobenius endomorphism (this is possible because we are in a perfect field or even over a finite field of characteristic $p$ essentially). Applying our result to $\phi,\psi_{sep}$ we get an isogeny $\lambda:\lambda\circ\phi=\psi_{sep} \implies \pi^n\circ\lambda\circ\phi=\psi$. The required isogeny is thus $\lambda'=\pi^n\circ\lambda$.
\end{remark}
The above remark settles Proposition \ref{main}. 
\section{Computational Aspects}
The proposed approach also offers a way to compute the dual isogeny directly using V{\'e}lu's formulae. Let $\phi:E\to E/G$ be an isogeny over $K$, then the required steps are:
\begin{enumerate}
    \item compute $\phi_{sep}$ and $n\in\mathbb{N}$ such that $\phi=\phi_{sep}\circ\pi^n$
    \item compute $\hat{\pi}$ via the $[p]$ endomorphism
    \item compute $[m]_{sep}$ and $e\in\mathbb{N}$ such that $[m]=\pi^e\circ [m]_{sep}$
    \item normalize $\phi_{sep}$ using $i_{\phi}$ into $\phi_{norm}$
    \item normalize $[m]_{sep}$ using $i_{[m]}$ into $[m]_{norm}$
    \item using V{\'e}lu's formulae this gives $\lambda:\lambda\circ\phi_{norm}=[m]_{norm}$
    \item set $\widehat{\phi_{sep}}=i_{[m]}^{-1}\circ\pi^e\circ\lambda\circ i_{\phi}$, the dual isogeny is $\widehat{\phi}={\hat{\pi}}^n\circ\widehat{\phi_{sep}}$
\end{enumerate}
Seperating the seperable and inseperable part is a straightforward procedure: writing $\phi$ in terms of rational functions it simply means finding the greatest power of $p$ dividing all the exponents. The second part is also immediate from \ref{silv}, simply write $[p]=[p]_{sep}\circ\pi^k$ and take $\hat{\pi}=[p]_sep\circ\pi^{k-1}$. The only remaining step is the normalization proccess which we will describe explicitly.
\\
\begin{remark}
Given an isogeny $\phi:E\to E'$, we are looking for an isomorphism $i_{\phi}:E'\to E''$ such that $(i_{\phi}\circ\phi)^*(\omega')=\omega$. Writing $\omega=\frac{dx}{2y+a_1x+a_3},\omega'=\frac{dx}{2y+{a_1}'x+{a_3}'}$ and $\phi(x,y)=(r(x),s(x)y+z(x))$ in rational form, abusing notation we get the equation $\frac{d(i_{\phi}(r(x)))}{2i_{\phi}(s(x)y+z(x))+{a_1}'i_{\phi}(r(x))+{a_3}'}=\frac{dx}{2y+a_1x+a_3}$
\end{remark}
We can tackle this problem by noting that $i_{\phi}(x,y)=(u^2x+t_1,u^3y+t_2u^2x+t_3)$ for some $t_i\in K$ which leads us to the following lemma:
\begin{lemma}
Let $i_{\phi}:E\to E'$ be an isomorphism as above, then $i_{\phi}^*(\omega')=\frac{1}{u}\omega$
\end{lemma}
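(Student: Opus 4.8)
The plan is to compute the pullback $i_\phi^*(\omega')$ directly from the explicit change of coordinates $i_\phi(x,y) = (u^2 x + t_1, u^3 y + t_2 u^2 x + t_3)$, using the fact that $\omega' = \frac{dx'}{2y' + a_1' x' + a_3'}$ is the invariant differential on $E'$ written in the primed coordinates $(x',y')$. First I would substitute $x' = u^2 x + t_1$ and $y' = u^3 y + t_2 u^2 x + t_3$ into the expression for $\omega'$, so that $dx' = d(u^2 x + t_1) = u^2\, dx$ since $u, t_1 \in K$ are constants.

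Next I would handle the denominator. We have $2y' + a_1' x' + a_3' = 2(u^3 y + t_2 u^2 x + t_3) + a_1'(u^2 x + t_1) + a_3'$. The key point is that the coefficients $a_i'$ of $E'$ are not independent of $u, t_i$: they are exactly what makes $i_\phi$ an isomorphism of Weierstrass curves, i.e. they are determined by the standard transformation law relating the $a_i$ of $E$ to the $a_i'$ of $E'$ under a $(u, r, s, t)$-type change of variables (here with $r = t_1$, and $s, t$ expressible through $t_2, t_3$). Invoking those relations, the whole denominator should collapse to $u^3(2y + a_1 x + a_3)$ — this is really just the classical statement that the invariant differential is invariant up to the factor $u^{-1}$, repackaged in this paper's notation.

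Combining the two pieces gives $i_\phi^*(\omega') = \frac{u^2\, dx}{u^3(2y + a_1 x + a_3)} = \frac{1}{u}\cdot\frac{dx}{2y + a_1 x + a_3} = \frac{1}{u}\omega$, which is the claim.

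The main obstacle I expect is bookkeeping with the Weierstrass transformation formulas: one has to be careful to use the correct relations between $(a_1,\dots,a_6)$, $(a_1',\dots,a_6')$ and the parameters $u, t_1, t_2, t_3$ (in particular matching this paper's $t_i$ to the usual $(r,s,t)$), and to verify that the $y$-coefficient, the $x$-coefficient, and the constant term in the denominator all line up to produce exactly the factor $u^3$ times the original denominator. Once the substitution for $dx'$ is made, everything reduces to that one identity, so the argument is short modulo this standard computation; I would likely just cite the transformation law (e.g. from [\ref{silv}], III.1) rather than rederiving it.
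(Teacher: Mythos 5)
Your proposal is correct and follows essentially the same route as the paper, which simply asserts ``a direct computation'' yielding a factor $u^2$ in the numerator (from $dx' = u^2\,dx$) and $u^3$ in the denominator (from the $y$-term). Your version is in fact more careful than the paper's one-line argument, since you explicitly note that the collapse of the denominator to $u^3(2y + a_1x + a_3)$ relies on the standard Weierstrass transformation relations between the $a_i$ and $a_i'$.
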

\begin{proof}
The proof is a direct computation along with the observation that the nominator has a term of the form $u^3$ in the denominator and $u^2$ in the nominator.
\end{proof}
Using the Lemma above we immediately obtain the desired result:
\begin{itemize}
    \item for the isogeny $[m]_{sep}$ we simply set $u=(\frac{m}{p^e})^{-1}$ where $p^{e+1}\not|m$
    \item for $\phi_{sep}$ we compute $\phi_{sep}^*(\omega')=c\omega$ and set $u=c^{-1}$
\end{itemize}
Combining these results enables us to compute explicitly the dual isogeny without resorting to the usual method of computing via the preimage of the arbitary isogeny $\phi$.

\newpage


\begin{thebibliography}{8}


\bibitem{ref_book1} \label{silv}
J. Silverman, The Arithmetic of Elliptic Curves. 2nd edn. Springer,
San Fransisco (2008)

\bibitem{velu} \label{vel}
Josep M. Miret, Ramiro Moreno and Anna Rio, Generalizations of V{\'e}lus formulae for isogenies between elliptic curves,
Publ. Mat. (2007)


\bibitem{ref_url1} \label{lect}
Algorithms for Algebraic Curves, \url{https://www.math.u-bordeaux.fr/~jcouveig/cours/course5bis.pdf}. Last accessed 8
April 2021
\end{thebibliography}
\end{document}